\numberwithin{equation}{section}
\newcommand{\calD}{\mathcal{D}}
\newcommand{\calL}{\mathcal{L}}
\newcommand{\calS}{\mathcal{S}}
\newcommand{\calW}{\mathcal{W}}
\newcommand{\mA}{\mathbb{A}}
\newcommand{\mC}{\mathbb{C}}
\newcommand{\mD}{\mathbb{D}}
\newcommand{\mR}{\mathbb{R}}
\newcommand{\mZ}{\mathbb{Z}}
\newtheorem{theorem}{Theorem}[section]
\newtheorem{proposition}[theorem]{Proposition}
\theoremstyle{definition}
\newtheorem{remark}[theorem]{Remark}
\newtheorem{example}[theorem]{Example}
\theoremstyle{definition}
\newtheorem{definition}[theorem]{Definition}
\theoremstyle{definition}
\begin{document}

\keywords{partial differential equations, 
  distributions that are periodic in the spatial directions, Fourier transformation, 
behaviours, autonomy, controllability, spatially invariant systems}

\subjclass[2010]{Primary 35A24; Secondary 93B05, 93C20}

\title[Behaviours of spatially invariant systems]{Algebraic 
characterization of autonomy and controllability  of behaviours of 
spatially invariant systems}

\author{Amol Sasane}
\address{Department of Mathematics, London School of Economics,
     Houghton Street, London WC2A 2AE, United Kingdom.}
\email{sasane@lse.ac.uk}

\begin{abstract} 
  We give algebraic characterizations of the properties 
of autonomy and of controllability of behaviours of 
spatially invariant dynamical systems, consisting of distributional solutions $w$,  
that are periodic in the spatial variables, to a system of partial differential equations   
$$
M\left(\frac{\partial}{\partial x_1},\cdots, \frac{\partial}{\partial x_d} , 
\frac{\partial}{\partial t}\right) w=0,
$$
corresponding to a polynomial matrix  $M\in (\mC[\xi_1,\dots, \xi_d, \tau])^{m\times n}$. 
\end{abstract}

\maketitle

\section{Introduction}

Consider a homogeneous, linear, constant coefficient partial differential equation, 
in $\mR^{d+1}$ described by a polynomial matrix $p\in \mC[\xi_1,\dots, \xi_d, \tau]$:
\begin{equation}
\label{eq_ker_rep_p}
p\left(\frac{\partial}{\partial x_1},\cdots, \frac{\partial}{\partial x_d} , 
\frac{\partial}{\partial t}\right) w=0.
\end{equation}
That is, the differential operator 
$$
p\left( \displaystyle 
\frac{\partial}{\partial x_1}, \cdots, \frac{\partial}{\partial x_d}, \frac{\partial}{\partial t}\right)
$$  
is obtained from the polynomial $p\in \mC[\xi_1,\dots, \xi_d, \tau]$ by making the replacements
$$
\xi_k\rightsquigarrow\frac{\partial}{\partial x_k} \;\textrm{ for } k=1, \dots, d,\;\textrm{ and }\;\;
\tau\rightsquigarrow \frac{\partial}{\partial t}.
$$
More generally, given a polynomial {\em matrix} 
$M \in  (\mC[\xi_1,\dots, \xi_d, \tau])^{m\times p}$, consider the corresponding 
 {\em system} of PDEs
\begin{equation}
\label{eqn_PDE_system}
M\left( \displaystyle 
\frac{\partial}{\partial x_1}, \cdots, \frac{\partial}{\partial x_d}, \frac{\partial}{\partial t}\right)w:=
\left[\begin{array}{ccc}
     \displaystyle   \sum_{j=1}^n p_{1j} \left( \displaystyle 
\frac{\partial}{\partial x_1}, \cdots, \frac{\partial}{\partial x_d}, \frac{\partial}{\partial t}\right)w_{j} \\
\vdots \\
\displaystyle \sum_{j=1}^n p_{mj} \left( \displaystyle 
\frac{\partial}{\partial x_1}, \cdots, \frac{\partial}{\partial x_d}, \frac{\partial}{\partial t}\right)w_{j}
      \end{array}
 \right]
=
0,
\end{equation}
where solutions $w$ now have the $n$ components $w_1, \dots, w_n$, and $M=[p_{ij}]$ with $p_{ij}$ denoting the 
polynomial entries of $M$ for $1\leq i\leq m$ and $1\leq j \leq n$. 

In the behavioural approach to control theory pioneered by Willems \cite{PolWil}, the  ``behaviour'' ${\mathfrak{B}}_{\calW}(M)$ 
associated with $M$  in $\calW^n$ (where $\calW$ is an appropriate solution space, 
for example smooth functions $C^\infty(\mR^{d+1})$ or distribution spaces like 
$\calD'(\mR^{d+1})$ or $\calS'(\mR^{d+1})$ and so on), is defined to be the set of all solutions 
$w\in \calW^n$ that satisfy the above PDE system \eqref{eqn_PDE_system}. The aim is then to obtain algebraic characterizations 
(in terms of algebraic properties of the polynomial matrix $M$) of 
certain analytical properties of ${\mathfrak{B}}_{\calW}(M)$ (for 
example, the control theoretic properties of autonomy, 
controllability, stability, and so on). We refer the reader to \cite{PolWil} for 
background on the behavioural approach in the case of systems of ordinary 
differential equations, and to  \cite{PilSha}, \cite{SasThoWil}, \cite{ObeSch} 
\cite{BalSta} for distinct takes on this in the context of systems described by partial differential equations. 

The goal of this article is to give algebraic characterizations of the properties 
of autonomy and of controllability of behaviours of 
spatially invariant dynamical systems, consisting of distributional solutions $w$,  
that are periodic in the spatial variables, to a system of partial differential equations   
$$
M\left(\frac{\partial}{\partial x_1},\cdots, \frac{\partial}{\partial x_d} , 
\frac{\partial}{\partial t}\right) w=0,
$$
corresponding to a polynomial matrix  $M\in (\mC[\xi_1,\dots, \xi_d, \tau])^{m\times n}$. 
We give the relevant definitions below, and also state our two main results in Theorem~\ref{theorem_matricial_autonomy} 
(characterizing autonomy) and Theorem~\ref{result_2} (characterizing controllability). 

\subsection{Autonomy} 

Let us first consider the property of ``autonomy'', which means the following. 

\begin{definition}
 Let $\calW$ be a subspace of $\calD'(\mR^{d+1})$ which is invariant under differentiation, that is, 
  for all $w\in \calW$,
$$
 \frac{\partial}{\partial x_k}w \in  \calW\; \textrm{ for } k=1, \dots, d,\; \textrm{ and }\;\; 
 \frac{\partial}{\partial t}w \in  \calW. 
$$
If $M\in (\mC[\xi_1, \dots, \xi_d,\tau])^{m\times n}$, then the {\em behaviour} ({\em of $M$ in $\calW$}) is 
$$
{\mathfrak{B}}_{\calW}(M):= \left\{w\in \calW^n: 
M\left( \frac{\partial}{\partial x_1},\cdots,\frac{\partial}{\partial x_d},  
\frac{\partial}{\partial t}\right) w=0\right\}.
$$
We call  the behaviour ${\mathfrak{B}}_{\calW}(M)$ {\em autonomous} ({\em with respect to $\calW$}) if the only 
$w\in {\mathfrak{B}}_{\calW}(M)$ 
satisfying $w|_{t<0}=0$ is $w=0$. 
\end{definition}

The following \cite[Theorem~3.4]{SasThoWil} is a consequence of \cite[p.310, Theorem~8.6.7]{Hor}. 

\begin{proposition}
 \label{prop_SasThoWil}
Let $p\in \mC[\xi_1, \dots, \xi_d,\tau]$ be a nonzero polynomial. 
Then the behaviour 
$$
{\mathfrak{B}}_{\calD'(\mR^{d+1})}(p):=
\left\{w\in \calD'(\mR^{d+1}): p\left( \frac{\partial}{\partial x_1},\cdots, \frac{\partial}{\partial x_d}, 
\frac{\partial}{\partial t}\right) w=0\right\}
$$ 
corresponding to $p$ is 
autonomous if and only if $\deg p = \deg p(\mathbf{0},\tau)$. 
\end{proposition}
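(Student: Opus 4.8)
The plan is to translate both the hypothesis and the conclusion into the language of supports and of the principal symbol, and then to read the statement off from Hörmander's theorem, whose analytic content carries the whole argument. Write $p=\sum_j a_j(\xi_1,\dots,\xi_d)\,\tau^j$ with $a_j\in\mC[\xi_1,\dots,\xi_d]$ and set $m:=\deg p$. Since any monomial $\xi^\alpha\tau^m$ with $\alpha\neq \mathbf 0$ has total degree $>m$, the coefficient $a_m$ of $\tau^m$ is forced to be a \emph{constant}, whereas $\deg p(\mathbf 0,\tau)=\max\{j:a_j(\mathbf 0)\neq 0\}$. Hence $\deg p=\deg p(\mathbf 0,\tau)$ holds if and only if $a_m\neq 0$, i.e. if and only if the principal part $p_m$ (the homogeneous total-degree-$m$ part) satisfies $p_m(\mathbf 0,1)\neq 0$; geometrically, the conormal direction $N:=(\mathbf 0,1)\in\mR^{d+1}$ of the hyperplane $\{t=0\}$ is \emph{noncharacteristic} for $p$. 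On the other side, $w|_{t<0}=0$ means $\operatorname{supp} w\subseteq\{(x,t):t\ge 0\}=\{y:\langle y,N\rangle\ge 0\}$, so autonomy is exactly the assertion that the only distributional solution of $p(\partial)w=0$ supported in this closed half-space is $w=0$. Thus I would reduce the proposition to the single statement: \emph{half-space uniqueness holds if and only if $N$ is noncharacteristic}.

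For sufficiency (noncharacteristic $\Rightarrow$ autonomous), assume $a_m\neq 0$; dividing by $a_m$ I may take $p$ monic of degree $m$ in $\tau$. The guiding heuristic is a Fourier--Laplace/Paley--Wiener argument: if $w$ were tempered and supported in $\{t\ge 0\}$, its transform in $t$ would extend holomorphically into a half-plane $\operatorname{Im}\tau<0$, where $p(i\xi,\tau)$ is a monic polynomial of degree $m$ in $\tau$ and hence invertible off its zero set, and controlling the location of the roots would force the transform to vanish. The main obstacle is precisely that a general $w\in\calD'(\mR^{d+1})$ need be neither tempered nor compactly supported, so this computation does not apply verbatim; this is the point at which I would invoke Hörmander's theorem, whose proof supplies the required localization and growth estimates and yields $w=0$. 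For the two familiar extremes the mechanism is visible directly: for elliptic $p$ the solution is real-analytic and vanishes by unique continuation, while for $p$ hyperbolic with respect to $N$ it vanishes by finite propagation speed and Cauchy uniqueness; Hörmander's theorem interpolates between these using noncharacteristicity alone.

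For necessity (characteristic $\Rightarrow$ not autonomous), assume $a_m=0$, so that $\deg_\tau p=:K<m$ and $p_m$ vanishes on the whole $\tau$-axis, and I would exhibit an explicit nonzero $w$ supported in $\{t\ge 0\}$. First I would cut the number of variables down to two: for a generic $\theta\in\mR^d$ the two-variable polynomial $\tilde p(\eta,\tau):=p(\theta_1\eta,\dots,\theta_d\eta,\tau)$ still has total degree $m$ and still lacks the $\tau^m$-term, and a distribution of the form $w(x,t)=W(\langle\theta,x\rangle,t)$ solves $p(\partial)w=0$ if and only if $\tilde p(\partial_s,\partial_t)W=0$, with support and nonvanishing inherited from $W$. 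It then suffices to solve the reduced characteristic equation in $(s,t)$, for which I would use a Tychonoff/Gevrey construction: expand $W$ as a power series in $s$ with $t$-dependent coefficients, choose the initial coefficient(s) smooth, supported in $\{t\ge 0\}$ and flat at $t=0$, and determine the rest recursively. The loss of $t$-order ($K<m$) is exactly what makes the recursion gain enough regularity for the series to converge to a genuine nonzero solution supported in $\{t\ge 0\}$; equivalently this is the converse implication of Hörmander's theorem, which I could instead cite directly. In the degenerate case where some $\partial/\partial x_k$ divides the operator the counterexample is immediate, for instance $w=f(t)$ with $f\neq 0$ supported in $\{t\ge 0\}$.
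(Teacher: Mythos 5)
Your proposal is correct and takes essentially the same route as the paper: the paper gives no independent argument but simply quotes \cite[Theorem~3.4]{SasThoWil} as a consequence of H\"ormander's half-space uniqueness theorem \cite[p.~310, Theorem~8.6.7]{Hor}, and your translation of autonomy into uniqueness for solutions supported in $\{t\geq 0\}$ together with the observation that $\deg p=\deg p(\mathbf{0},\tau)$ holds precisely when the principal part satisfies $p_m(\mathbf{0},1)\neq 0$ (noncharacteristicity of the conormal $N=(\mathbf{0},1)$) is exactly that reduction. Your supplementary sketches of the two implications of H\"ormander's theorem are dispensable embellishments (the Tychonoff-style recursion would need more care in general, as you acknowledge by noting it can be replaced by a direct citation), so the core of your argument coincides with the paper's.
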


Here by $\deg(\cdot)$, we mean the {\em total degree}, which
is the maximum (over the monomials occurring in the polynomial) of the
sum of the degrees of the exponents of indeterminates in the monomial. Also, 
$p(\mathbf{0},\tau)$ denotes the polynomial in $\mC[\tau]$ obtained from $p\in 
\mC[\xi_1, \dots, \xi_d,\tau]$ by making the substitutions 
$\xi_k \mapsto 0$ for $k=1,\dots, d$. 

There has been recent interest in ``spatially invariant systems'', see 
for example \cite{CurIftZwa}, \cite{CurSas}, where one considers 
solutions to PDEs that are periodic along the spatial direction. So it is a 
natural question to ask what the analogue of Proposition~\ref{prop_SasThoWil} is, when we replace the 
solution space $\calD'(\mR^{d+1})$ with one that consists only of those solutions that are 
{\em periodic} in the 
spatial directions. 

In this article, our first main result is the following one, 
characterizing autonomy of spatially invariant systems. 

\begin{theorem}
 \label{theorem_matricial_autonomy}
Suppose that $\mA:=\{{\mathbf{a_1}}, \dots, {\mathbf{a_d}}\}$ is a linearly independent set vectors in $\mR^d$. Let 
$M \in (\mC[\xi_1,\dots, \xi_d, \tau])^{m\times n}$ and let 
$$
{\mathfrak{B}}_{\calD'_{\mA}(\mR^{d+1})}(R):=
\left\{w\in (\calD'_{\mA}(\mR^{d+1}))^{n}: 
M\left( \frac{\partial}{\partial x_1},\cdots, \frac{\partial}{\partial x_d}, 
\frac{\partial}{\partial t}\right) w=0\right\}.
$$ 
Then the following two statements are equivalent:
\begin{enumerate}
 \item ${\mathfrak{B}}_{\calD'_{\mA}(\mR^{d+1})}(M)$ is autonomous. $\phantom{\Big(}$
 \item For all $\mathbf{v}\in A^{-1}\mZ^d$, 
 $
\displaystyle \max_{t \in \mC} \Big(\textrm{\em rank}\left( M( 2\pi i \mathbf{v}, t) \right) \Big)= n. $ 
\end{enumerate}
\end{theorem}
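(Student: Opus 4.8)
The plan is to use Fourier analysis in the spatial variables to decouple the PDE behaviour into a family of constant-coefficient \emph{ordinary} differential behaviours indexed by the dual lattice $A^{-1}\mZ^d$, and then to settle the one-dimensional (in $t$) problem by the classical theory together with a normal-form argument. First I would set up the spatial Fourier series: every $w\in(\calD'_{\mA}(\mR^{d+1}))^n$ admits an expansion $w=\sum_{\mathbf{v}\in A^{-1}\mZ^d} w_{\mathbf{v}}(t)\,e^{2\pi i\mathbf{v}\cdot x}$, where each coefficient $w_{\mathbf{v}}\in(\calD'(\mR))^n$ is a distribution in the single variable $t$, obtained by pairing $w$ (in $x$ only, over a fundamental domain of the lattice) against $e^{-2\pi i\mathbf{v}\cdot x}$; the frequencies $\mathbf{v}$ range over the dual lattice $A^{-1}\mZ^d$, which is exactly the set for which $e^{2\pi i\mathbf{v}\cdot x}$ is $\mA$-periodic. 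Since $\partial_{x_k}$ acts on the mode $e^{2\pi i\mathbf{v}\cdot x}$ as multiplication by $2\pi i v_k$ while $\partial_t$ acts on the coefficient, uniqueness of Fourier coefficients makes the equation $M(\partial_{x_1},\dots,\partial_{x_d},\partial_t)w=0$ equivalent to the decoupled family $M(2\pi i\mathbf{v},\frac{d}{dt})\,w_{\mathbf{v}}=0$, one ODE system for each $\mathbf{v}$, with $M(2\pi i\mathbf{v},\tau)\in(\mC[\tau])^{m\times n}$ now a polynomial matrix in $\tau$ alone.

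Next I would transport the autonomy condition to the coefficients. Because $w_{\mathbf{v}}$ is extracted by integration in $x$ only, testing against a $t$-test function supported in $t<0$ shows $w|_{t<0}=0$ iff $w_{\mathbf{v}}|_{t<0}=0$ for every $\mathbf{v}$, while $w=0$ iff all $w_{\mathbf{v}}=0$. This reduces the theorem to the following one-dimensional claim applied to $R=M(2\pi i\mathbf{v},\cdot)$ for each $\mathbf{v}$: for $R\in(\mC[\tau])^{m\times n}$ the behaviour $\{y\in(\calD'(\mR))^n:R(\frac{d}{dt})y=0\}$ satisfies $y|_{t<0}=0\Rightarrow y=0$ if and only if $\mathrm{rank}\,R=n$ over the field $\mC(\tau)$. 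Since the generic rank of a polynomial matrix equals $\max_{t\in\mC}\mathrm{rank}\,R(t)$, full column rank of $M(2\pi i\mathbf{v},\cdot)$ is precisely statement (2). Granting the one-dimensional claim, (2)$\Rightarrow$(1) follows by taking $w$ in the behaviour with $w|_{t<0}=0$, passing to coefficients, killing each $w_{\mathbf{v}}$, and concluding $w=0$; and the contrapositive of (1)$\Rightarrow$(2) follows by choosing one frequency $\mathbf{v}_0$ with $\mathrm{rank}\,M(2\pi i\mathbf{v}_0,\cdot)<n$, a nonzero $t\ge 0$-supported coefficient solution $w_{\mathbf{v}_0}$, and forming the single-mode distribution $w=w_{\mathbf{v}_0}(t)\,e^{2\pi i\mathbf{v}_0\cdot x}$, a nonzero $\mA$-periodic element of the behaviour vanishing for $t<0$.

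It then remains to prove the one-dimensional equivalence. If $\mathrm{rank}\,R=n$, I pick an $n\times n$ submatrix with nonzero determinant $p\in\mC[\tau]$; multiplying the corresponding rows of $R(\frac{d}{dt})y=0$ by the adjugate yields $p(\frac{d}{dt})y_j=0$ for every component, so each $y_j$ is a smooth, real-analytic exponential polynomial, and vanishing on the open half-line $t<0$ forces $y=0$ by unique continuation (the scalar case, compare the $d=0$ instance of Proposition~\ref{prop_SasThoWil}). Conversely, if $\mathrm{rank}\,R=r<n$, I bring $R$ to column Hermite form over the principal ideal domain $\mC[\tau]$: there is a unimodular $U\in\mathrm{GL}_n(\mC[\tau])$ with $RU=[\,C\ \ 0\,]$, whose last $n-r$ columns vanish. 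Writing $y=U(\frac{d}{dt})\tilde y$ turns the equation into one constraining only the first $r$ components of $\tilde y$, leaving the last $n-r$ free; choosing $\tilde y=(0,\dots,0,H,0,\dots,0)$ with $H$ the Heaviside function gives, since the constant-coefficient operator $U(\frac{d}{dt})$ is support-nonincreasing and (by unimodularity) injective, a nonzero $y$ supported in $t\ge 0$, so the behaviour is not autonomous.

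The main obstacle I anticipate is analytic rather than algebraic: one must carefully justify the Fourier representation of $\calD'_{\mA}(\mR^{d+1})$ with coefficients that are bona fide distributions in $t$, establish the exact equivalence between the PDE and the full decoupled family (including the convergence and tempered growth of the coefficient family), and verify the clean transfer of the support condition $\{t<0\}$ between $w$ and every $w_{\mathbf{v}}$. Once these periodic-distribution foundations are secured, the reduction to the single-variable case and the Smith/Hermite-form construction are routine.
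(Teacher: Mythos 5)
Your proposal is correct and follows essentially the same route as the paper: spatial Fourier decomposition of trajectories in $\calD'_{\mA}(\mR^{d+1})$ into modes indexed by the dual lattice $A^{-1}\mZ^d$, reduction of autonomy to the decoupled family of ODE behaviours $M\left(2\pi i\mathbf{v},\frac{d}{dt}\right)$, and a single-mode trajectory $e^{2\pi i\mathbf{v_0}\cdot\mathbf{x}}\otimes\Theta$ with zero past as the counterexample in the necessity direction. The only substantive difference is that where the paper simply cites \cite[\S 3.2]{PolWil} for the one-variable equivalence (autonomy iff full column rank, i.e.\ iff $\max_{t\in\mC}\textrm{rank}\,M(2\pi i\mathbf{v},t)=n$), you prove it from scratch via the adjugate trick plus unique continuation of exponential polynomials, and a Smith/Hermite-form Heaviside construction for the converse, both of which are sound.
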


Here $\calD'_{\mA}(\mR^{d+1})$ is, roughly speaking, 
the set of all distributions on $\mR^{d+1}$ that are 
periodic in the spatial direction with a discrete set $\mA$ of periods. The precise definition of 
$ \calD'_{\mA}(\mR^{d+1})$ is given below in 
Subsection~\ref{subsection_sol_space}. 

We remark that our new result in Theorem~\ref{theorem_matricial_autonomy} is the multidimensional generalization 
of the result in \cite[\S 3.2]{PolWil}; see also \cite[Proposition~2.8.2]{Bel}. 

\begin{remark}
 \label{holmgren} Theorem~\ref{theorem_matricial_autonomy} can also be viewed as a version of 
the Holmgren Uniqueness Theorem. Indeed, a behaviour is autonomous if and only if the solution set of the 
system of PDEs \eqref{eqn_PDE_system} has the following uniqueness property: If a pair of solutions 
coincide in the past, then they are identical. In other words, whenever $w_1$, $w_2$ satisfying \eqref{eqn_PDE_system} are such that 
there exists a $\mathrm{T}\in \mR$ such that $w_1|_{t<\mathrm{T}}=w_2|_{t<\mathrm{T}}$, then $w_1=w_2$. 
\end{remark}

\subsection{The space $\calD'_{\mA}(\mR^{d+1})$} 
\label{subsection_sol_space}

 For ${\mathbf{a}}\in \mR^d$, the {\em translation operation} 
${\mathbf{S_a}}$ on distributions in $\calD'(\mR^d)$ is
defined by
$$
\langle {\mathbf{S_a}}(T),\varphi\rangle=\langle T,\varphi(\cdot+{\mathbf{a}})\rangle\;
\textrm{ for all }\varphi \in \calD(\mR^d).
$$
A distribution $T\in \calD'(\mR^d)$ is said to be {\em periodic with a period} 
$\mathbf{a}\in \mR^d$  if $T= {\mathbf{S_a}}(T)$.  

Let $\mA:=\{{\mathbf{a_1}}, \dots, {\mathbf{a_d}}\}$ be a linearly independent set vectors in $\mR^d$. 
We define $\calD'_{\mA}(\mR^d)$ to be the set of all distributions $T$ that satisfy 
$$
{\mathbf{S_{a_k}}}(T)=T, \quad k=1,\dots, d.
$$
From \cite[\S34]{Don}, $T$ is a tempered distribution, and 
from the above it follows by taking Fourier transforms that 
 $
(1-e^{2\pi i {\mathbf{a_k}} \cdot \mathbf{y}})\widehat{T}=0$ for $ k=1,\dots,d.
$ It can be seen that 
$$
\widehat{T}=\sum_{\mathbf{v} \in A^{-1} \mZ^d} \alpha_{\mathbf{v}}(T) \delta_{\mathbf{v}},
$$
for some scalars $\alpha_{\mathbf{v}}\in \mC$, and where  $A$ is the matrix with its rows equal to 
the transposes of the column vectors ${\mathbf{a_1}}, \dots, {\mathbf{a_d}}$:
$$
A:= \left[ \begin{array}{ccc} 
    \mathbf{a_1}^{\top} \\ \vdots \\ \mathbf{a_d}^{\top} 
    \end{array}\right].
$$
Also, in the above, $\delta_{\mathbf{v}}$ denotes the usual Dirac measure with support in $\mathbf{v}$:
$$
\langle \delta_{\mathbf{v}}, \psi\rangle =\psi (\mathbf{v}) \;\textrm{ for }\psi \in \calD'(\mR^d).
$$
By the Schwartz Kernel Theorem (see for instance
\cite[p.~128, Theorem~5.2.1]{Hor}),  $\calD'(\mR^{d+1})$ is isomorphic as a topological space to 
$\calL(\calD(\mR), \calD'(\mR^d))$, the space of all continuous linear maps from $\calD(\mR)$ to 
$\calD'(\mR^d)$, thought of as vector-valued distributions. For preliminaries on vector-valued distributions, 
we refer the reader to \cite{Car}. We indicate this isomorphism by putting an arrow on top of elements of 
$\calD'(\mR^{d+1})$. Thus for $w\in \calD'(\mR^{d+1})$, we set $\vec{w}\in \calL(\calD(\mR), \calD'(\mR^d))$ 
to be the vector valued distribution defined by 
$$
\langle \vec{w}(\varphi), \psi\rangle=\langle w, \psi \otimes \varphi\rangle 
$$
for $\varphi\in \calD(\mR)$ and $\psi \in \calD(\mR^d)$. We define 
$$ 
\calD'_{\mA}(\mR^{d+1})=\{w\in \calD'(\mR^{d+1}): \textrm{ for all }\varphi \in \calD(\mR), \;
\vec{w}(\varphi) \in \calD'_{\mA}(\mR^d)\}.
$$
Then for $w\in \calD'_{\mA}(\mR^{d+1})$, 
$$
 \frac{\partial}{\partial x_k}w \in  \calD'_{\mA}(\mR^{d+1}) \;\textrm{ for } k=1, \dots, d, \;
\textrm{ and } \;\; 
\frac{\partial}{\partial t}w \in  \calD'_{\mA}(\mR^{d+1}). 
$$
Also, for $w\in \calD'_{\mA}(\mR^{d+1})$, we define $\widehat{w}\in \calD'(\mR^{d+1})$ 
by 
$$
\langle \widehat{w}, \psi \otimes \varphi \rangle
=
\langle \vec{w}(\varphi), \widehat{\psi} \rangle,
$$
for $\varphi \in \calD(\mR)$ and $\psi \in \calD(\mR^d)$. That this specifies a well-defined 
distribution in $\calD'(\mR^{d+1})$, can be seen using the fact that for every $\Phi\in 
\calD(\mR^{d+1})$, there exists a sequence of functions $(\Psi_n)_n$ that are finite sums of 
direct products of test functions, that
is, $\Psi_n=\sum_{k} \psi_{k} \otimes \varphi_k$, where 
$\psi_k\in \calD(\mR^d)$ and  $\varphi_k\in \calD(\mR)$, such that $\Psi_n$ converges to $\Phi$  
in $\calD(\mR^{d+1})$. We also have 
$$
\widehat{
\frac{\partial}{\partial x_k} w}= 2\pi i y_k \widehat{w} \;\textrm{ for } k=1, \dots, d,\;
\textrm{ and }\;\;
\widehat{
\frac{\partial}{\partial t} w}=\frac{\partial }{\partial t}\widehat{w}.
$$
Here $\mathbf{y}=(y_1,\dots, y_d)$ is  the Fourier transform variable. 

\subsection{Controllability}

Next we consider the property of controllability for a behaviour.

\begin{definition}
Let $\calW$ be a subspace of $(\calD'(\mR^{d+1}))^n$ which is invariant under differentiation.  
For $M\in (\mC[\xi_1, \dots, \xi_d,\tau])^{m\times n}$, we call its behaviour ${\mathfrak{B}}_{\calW}(M)$ in $\calW$ 
{\em controllable} if for every $w_1, w_2 \in {\mathfrak{B}}_{\calW}(M)$, there 
is a $w\in {\mathfrak{B}}_{\calW}(M)$ and a $\textrm{T} \geq 0$ such that  $
w|_{(-\infty, 0)}=w_1|_{(-\infty,0)}$  and $ w|_{(\textrm{T} , +\infty)} =w_2|_{(\textrm{T} , +\infty)} $. 
\end{definition}

The second main result in this article is the following one, characterizing 
controllability of spatially invariant systems:

\begin{theorem}
\label{result_2}
Suppose that $\mA:=\{{\mathbf{a_1}}, \dots, {\mathbf{a_d}}\}$ is a linearly independent set vectors in $\mR^d$. Let 
$M \in (\mC[\xi_1,\dots, \xi_d, \tau])^{m\times n}$ and let 
$$
{\mathfrak{B}}_{\calD'_{\mA}(\mR^{d+1})}(M):=
\left\{w\in (\calD'_{\mA}(\mR^{d+1}))^{n}: 
M\left( \frac{\partial}{\partial x_1},\cdots, \frac{\partial}{\partial x_d}, 
\frac{\partial}{\partial t}\right) w=0\right\}.
$$ 
Then the following two statements are equivalent:
\begin{enumerate}
 \item ${\mathfrak{B}}_{\calD'_{\mA}(\mR^{d+1})}(M)$ is controllable.
 \item For  each 
$\mathbf{v}\in A^{-1}\mZ^d$, there exists a nonnegative integer $r_{\mathbf{v}}\leq \min\{n,m\}$  such that  all $t\in \mC$, 
$
\textrm{\em rank}\left( M( 2\pi i \mathbf{v}, t) \right) = r_{\mathbf{v}}. $ 
\end{enumerate}
\end{theorem}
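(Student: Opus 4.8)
The plan is to decouple the partial differential system into a lattice-indexed family of ordinary differential systems by means of the spatial Fourier transform, and then to apply the one-variable controllability criterion slice by slice. For $w\in\calD'_{\mA}(\mR^{d+1})$ the spatial Fourier transform $\widehat{w}$ is supported, in the $\mathbf{y}$-variables, on the dual lattice $A^{-1}\mZ^d$, so that $\widehat{w}=\sum_{\mathbf{v}\in A^{-1}\mZ^d}\delta_{\mathbf{v}}\otimes\eta_{\mathbf{v}}$ with each $\eta_{\mathbf{v}}\in(\calD'(\mR))^n$ a distribution in the time variable alone. By the transform rules $\widehat{\partial_{x_k}w}=2\pi i y_k\widehat{w}$ and $\widehat{\partial_t w}=\partial_t\widehat{w}$ from Subsection~\ref{subsection_sol_space}, differentiation in $x_k$ acts on the mode at $\mathbf{v}$ as multiplication by $2\pi i v_k$, so $M(\partial_{x_1},\dots,\partial_{x_d},\partial_t)w=0$ is equivalent to the decoupled family
$$
M\big(2\pi i\mathbf{v},\partial_t\big)\,\eta_{\mathbf{v}}=0,\qquad \mathbf{v}\in A^{-1}\mZ^d .
$$
Thus $w\in\mathfrak{B}_{\calD'_{\mA}(\mR^{d+1})}(M)$ if and only if $\eta_{\mathbf{v}}$ lies in the one-variable behaviour $\mathfrak{B}_{\mathbf{v}}:=\{\eta\in(\calD'(\mR))^n:M(2\pi i\mathbf{v},\partial_t)\eta=0\}$ for every $\mathbf{v}$. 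Since the spatial transform does not touch $t$, it commutes with restriction to time half-lines, so $w|_{t<0}=0$ corresponds to $\eta_{\mathbf{v}}|_{t<0}=0$ for all $\mathbf{v}$, and similarly for $t>\mathrm{T}$. I would record this as a lemma, identifying $\mathfrak{B}_{\calD'_{\mA}(\mR^{d+1})}(M)$ with the families $(\eta_{\mathbf{v}})_{\mathbf{v}}$, $\eta_{\mathbf{v}}\in\mathfrak{B}_{\mathbf{v}}$, that meet the polynomial-growth (temperedness) condition characterising $\calD'_{\mA}(\mR^{d+1})$.

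For (1)$\Rightarrow$(2) I would show that controllability of the whole behaviour forces controllability of each slice. Given $\eta_1,\eta_2\in\mathfrak{B}_{\mathbf{v}}$, lift them to the single-mode distributions $w_i$ with $\widehat{w_i}=\delta_{\mathbf{v}}\otimes\eta_i$; these are genuine elements of $\calD'_{\mA}(\mR^{d+1})$. Controllability applied to $w_1,w_2$ produces a patch $w$, and projecting $w$ onto its $\mathbf{v}$-mode gives a trajectory in $\mathfrak{B}_{\mathbf{v}}$ joining $\eta_1$ (in the past) to $\eta_2$ (in the future); hence $\mathfrak{B}_{\mathbf{v}}$ is controllable. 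By the classical one-variable criterion (see \cite{PolWil}, \cite{Bel}), controllability of $\mathfrak{B}_{\mathbf{v}}$ is equivalent to $\mathrm{rank}\,M(2\pi i\mathbf{v},t)$ being independent of $t\in\mC$, and calling this constant value $r_{\mathbf{v}}\le\min\{m,n\}$ yields (2).

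For (2)$\Rightarrow$(1) I would turn the constant-rank condition into an \emph{image representation} of each slice. Over the principal ideal domain $\mC[\tau]$ the matrix $M(2\pi i\mathbf{v},\tau)$ has a Smith form; its nonzero invariant factors are polynomials that never vanish for $\tau\in\mC$ (as the rank is constant), hence are nonzero constants, so there are unimodular $U_{\mathbf{v}},V_{\mathbf{v}}$ with $U_{\mathbf{v}}M(2\pi i\mathbf{v},\tau)V_{\mathbf{v}}=\left[\begin{smallmatrix}I_{r_{\mathbf{v}}}&0\\0&0\end{smallmatrix}\right]$. Consequently $\mathfrak{B}_{\mathbf{v}}=\{V_{2,\mathbf{v}}(\partial_t)\ell:\ell\in(\calD'(\mR))^{n-r_{\mathbf{v}}}\}$, where $V_{2,\mathbf{v}}$ comprises the last $n-r_{\mathbf{v}}$ columns of $V_{\mathbf{v}}$. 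Patching is now immediate and, decisively, uses a \emph{fixed} transition time: fixing once and for all a smooth $\chi$ with $\chi\equiv0$ on $t<0$ and $\chi\equiv1$ on $t>1$, and writing $\eta^{(2)}_{\mathbf{v}}-\eta^{(1)}_{\mathbf{v}}=V_{2,\mathbf{v}}(\partial_t)\ell_{\mathbf{v}}$, the trajectory $\eta_{\mathbf{v}}:=\eta^{(1)}_{\mathbf{v}}+V_{2,\mathbf{v}}(\partial_t)(\chi\,\ell_{\mathbf{v}})$ lies in $\mathfrak{B}_{\mathbf{v}}$, equals $\eta^{(1)}_{\mathbf{v}}$ for $t<0$ and equals $\eta^{(2)}_{\mathbf{v}}$ for $t>1$, by locality of the operator $V_{2,\mathbf{v}}(\partial_t)$. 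Hence $\mathrm{T}=1$ works for every mode at once.

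The main obstacle is the \emph{assembly}: I must check that the patched family $(\eta_{\mathbf{v}})_{\mathbf{v}}$ again satisfies the growth condition, so that it is the Fourier image of an actual $w\in\calD'_{\mA}(\mR^{d+1})$. This is delicate because $\ell_{\mathbf{v}}$ is recovered modewise as $(\text{lower block of }V_{\mathbf{v}}^{-1})(\partial_t)(\eta^{(2)}_{\mathbf{v}}-\eta^{(1)}_{\mathbf{v}})$, and a priori the unimodular reductions $U_{\mathbf{v}},V_{\mathbf{v}}$ could behave wildly as $\mathbf{v}$ varies. To tame them I would carry out the Smith reduction once over the field $\mC(\xi_1,\dots,\xi_d)$, treating $\xi$ as parameters: over the principal ideal domain $\mC(\xi)[\tau]$ one obtains transformation matrices that are polynomial in $\tau$ of bounded degree with coefficients rational in $\xi$, and evaluating at $\xi=2\pi i\mathbf{v}$ then yields modewise operators whose coefficients grow only polynomially in $\mathbf{v}$, forcing $(\eta_{\mathbf{v}})$ to inherit polynomial growth from the data. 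The residual difficulty, which I expect to be the crux of the technical work, is the set of lattice points where the common denominators of this generic reduction vanish: there the generic formulas fail, even though hypothesis~(2) still supplies an image representation of each individual slice. I would handle this exceptional locus by clearing denominators so that the transformation matrices remain polynomial in $(\xi,\tau)$ and absorbing the resulting scalar factors, thereby preserving the uniform polynomial bounds required for the assembly.
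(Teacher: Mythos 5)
Your mode decomposition and your (1)$\Rightarrow$(2) argument coincide with the paper's proof: the single-mode lifts with $\widehat{w_i}=\delta_{\mathbf{v}}\otimes\eta_i$ are exactly the trajectories $e^{2\pi i\mathbf{v}\cdot\mathbf{x}}\otimes\Theta_i$ used there, and projecting a patching trajectory onto the $\mathbf{v}$-mode reduces matters to the classical one-variable rank criterion. The genuine gap is in the assembly step of (2)$\Rightarrow$(1), exactly where you locate ``the crux'', and the repair you sketch does not work as stated. Clearing denominators in the generic Smith transformations over $\mC(\xi_1,\dots,\xi_d)[\tau]$ yields matrices polynomial in $(\xi,\tau)$ whose determinants vanish precisely on the exceptional variety; at a lattice point of that variety they are singular, hence not unimodular over $\mC[\tau]$, and furnish no image representation of that slice --- and the bad set cannot be dismissed as finite, since already for the standard lattice with $d\geq 2$ it can contain a full sublattice, on which moreover $r_{\mathbf{v}}$ jumps. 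Concretely, $M=[\,\xi_1\tau\;\;\xi_1\,]$ satisfies (2) with $r_{\mathbf{v}}=1$ for $v_1\neq 0$ and $r_{\mathbf{v}}=0$ on the infinitely many lattice points with $v_1=0$; no single polynomial pair $(U,V)$ can realize both Smith forms, so your route needs a finite stratification of the lattice (Noetherian induction on the exceptional varieties) with a separate polynomially bounded reduction on each stratum. There is a second, smaller omission: writing $W_{\mathbf{v}}$ for the lower block of $V_{\mathbf{v}}^{-1}$, your growth estimate pairs $\eta^{(2)}_{\mathbf{v}}-\eta^{(1)}_{\mathbf{v}}$ against the test functions $W_{\mathbf{v}}(-\partial_t)^{\top}\bigl(\chi\,V_{2,\mathbf{v}}(-\partial_t)^{\top}\varphi\bigr)$, which vary with $\mathbf{v}$, whereas the temperedness hypothesis supplies constants $M_\varphi$, $k_\varphi$ only for each fixed $\varphi$; you need uniformity over bounded subsets of test functions with a fixed compact support (a Baire-category argument on $\calD_K(\mR)$ gives it, but it has to be proved). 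As it stands, the sufficiency half of your proposal is a program rather than a proof.

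It is instructive to compare with the paper's own proof, which never meets the $\mathbf{v}$-uniformity problem: it patches the modes directly, setting $T^{(\mathbf{v})}:=\theta T_1^{(\mathbf{v})}+\theta(\mathrm{T}-\cdot)T_2^{(\mathbf{v})}$ with a single fixed cutoff $\theta$, so the growth bound is immediate because $\langle\theta T_1^{(\mathbf{v})},\varphi\rangle=\langle T_1^{(\mathbf{v})},\theta\varphi\rangle$ involves only the two fixed test functions $\theta\varphi$ and $\theta(\mathrm{T}-\cdot)\varphi$. The delicacy there sits at the opposite point: that the patched modes still satisfy $M(2\pi i\mathbf{v},\frac{d}{dt})T^{(\mathbf{v})}=0$ in the transition window is where hypothesis (2) must be brought to bear (a cutoff of a solution is not in general a solution), and this is precisely what your latent-variable cutoff $\eta^{(1)}_{\mathbf{v}}+V_{2,\mathbf{v}}(\partial_t)(\chi\,\ell_{\mathbf{v}})$ makes automatic. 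So your instinct that (2) has to enter the construction of the patched modes is sound; the price is the uniformity in $\mathbf{v}$ that you did not secure. A workable synthesis is to patch via your slice image representations, but arrange, stratum by stratum on the lattice, that the coefficients of $V_{2,\mathbf{v}}$ and $W_{\mathbf{v}}$ depend polynomially on $\mathbf{v}$ with degrees in $\partial_t$ bounded over the stratum, and then run the fixed-test-function estimate as the paper does.
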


We remark that our new result in Theorem~\ref{result_2} is the multidimensional generalization 
of \cite[Theorem~5.2.5]{PolWil}.

\section{Proof of Theorem~\ref{theorem_matricial_autonomy}}

Before we prove our main result, we illustrate the key idea behind the 
sufficiency of our  algebraic condition  for autonomy. Take a trajectory in 
the behaviour with a zero past. By taking Fourier transform with respect to the spatial variables, 
the partial derivatives with respect to the spatial variables are converted into the polynomial
coefficients $c_{ij}(2\pi i \mathbf{y} )$, where $\mathbf{y}$ is the vector of 
Fourier transform variables $y_1 ,\dots, y_d$. But the support of $\widehat{w}$ is carried 
on a family of lines, indexed by $\mathbf{n}\in\mZ^d$, in $\mR^{d+1}$ parallel 
to the time axis. So we obtain a family of ODEs,
parameterized by $\mathbf{n}\in\mZ^d$, and by ``freezing'' an $\mathbf{n}\in \mZ^d$, we get
an ODE, where for a solution we can indeed say that zero past implies 
a zero future, and so the proof can be completed easily.

\begin{proof}[Proof of Theorem~\ref{theorem_matricial_autonomy}] {\bf (1) $\Rightarrow$ (2)}: 
Let $\mathbf{v_0}\in A^{-1}\mZ^d$ be such that 
$$
\displaystyle \max_{t \in \mC} \Big(\textrm{rank}(M(2\pi i \mathbf{v_0}, t))\Big) <n.
$$ 
Then from \cite[\S 3.2]{PolWil}, it follows that the smooth behaviour 
${\mathfrak{B}}_{C^{\infty}(\mR)}(M(2\pi i \mathbf{v_0},\tau))$ of the system of ODEs corresponding 
to the polynomial matrix $M(2\pi i \mathbf{v_0},\tau) \in (\mC[\tau])^{m\times n}$ is {\em not} autonomous. 
This means that there is a nonzero $\Theta \in {\mathfrak{B}}_{C^{\infty}(\mR)}(M(2\pi i \mathbf{v_0},\tau))$ 
such that $\Theta|_{t<0}=0$. 

Define $w:=e^{2\pi i{\mathbf{v_0}} \cdot \mathbf{x}}\otimes \Theta$. Here  $\mathbf{v_0}\cdot\mathbf{x}$ is the usual Euclidean 
inner product in the complex vector space $\mC^d$ of $\mathbf{v_0}$ and $\mathbf{x}$. 
Then we have that $w\in (\calD'_{\mA}(\mR^{d+1}))^n$, since  
$$
\mathbf{S_{a_k}} w= e^{2\pi i \mathbf{v_0}\cdot (\mathbf{x}+\mathbf{a_k})}\otimes \Theta
=e^{2\pi i \mathbf{v_0}\cdot \mathbf{a_k}}e^{2\pi i \mathbf{v_0}\cdot \mathbf{x}}\otimes \Theta
= 1\cdot e^{2\pi i\mathbf{v_0}\cdot \mathbf{x}}\otimes \Theta= w.
$$
Moreover, $w$ has 
zero past, that is, $w|_{t<0}=0$ because $\Theta|_{t<0}=0$. 
Also, the trajectory $w\in {\mathfrak{B}}_{\calD'_{\mA}(\mR^{d+1})}(M)$ because
 $$
M\left(\frac{\partial}{\partial x_1},\cdots, \frac{\partial}{\partial x_d} , \frac{\partial}{\partial t}\right) 
 w= e^{2\pi i \mathbf{v_0}\cdot \mathbf{x}}  M\left( 2\pi i \mathbf{v_0} , \frac{d}{dt} \right)  \Theta
=e^{2\pi i \mathbf{v_0}\cdot \mathbf{x}} \cdot 0=0.
$$ 
Consequently,  ${\mathfrak{B}}_{\calD'_{\mA}(\mR^{d+1})}(M)$ is not autonomous. This 
completes the proof of the `only if' part. 

\bigskip 

\noindent  {\bf (2) $\Rightarrow$ (1)}: On the other hand, now suppose that for each 
$\mathbf{v}\in A^{-1}\mZ^d$, 
$$
\displaystyle \max_{t \in \mC} \Big(\textrm{rank}(M(2\pi i \mathbf{v}, t))\Big) =n.
$$ 
 Then 
from  \cite[\S 3.2]{PolWil}, it follows that the distributional behaviour 
${\mathfrak{B}}_{\calD'(\mR)}(M(2\pi i \mathbf{v},\tau))$ 
of the system of ODEs corresponding to the polynomial matrix 
$M(2\pi i \mathbf{v},\tau) \in (\mC[\tau])^{m\times n}$ is autonomous. Thus for each 
$\mathbf{v}\in A^{-1}\mZ^d$ and for a distribution 
$T\in (\calD'(\mR))^n$ such that $T|_{t<0}=0$ and 
$$
M\left(2\pi i \mathbf{v},\frac{d}{dt}\right)T=0,
$$
there holds that $T=0$. 

Now suppose that the trajectory $w\in (\calD'_{\mA}(\mR^{d+1}))^n$ satisfies 
$w|_{t<0}=0$ and 
\begin{equation}
 \label{eq_pf_if_part}
M\left(\frac{\partial}{\partial x_1},\cdots, \frac{\partial}{\partial x_d} , \frac{\partial}{\partial t}\right) 
 w=0.
\end{equation} 
 Upon taking Fourier transformation on both
sides of the equation \eqref{eq_pf_if_part} with respect to the spatial
variables, we obtain
\begin{equation}
\label{eq_ast}
M\left(2\pi i \mathbf{y}, \frac{\partial}{\partial t} \right) \widehat{w} =0.
\end{equation}
For each fixed $\varphi \in \calD(\mR)$, $\vec{w}(\varphi)\in (\calD_{\mA}'(\mR^d))^{n}$, and so it 
follows that 
\begin{equation}
\label{eq_ast2}
\widehat{w}=\sum_{\mathbf{v} \in A^{-1} \mZ^d} \delta_{\mathbf{v}} \;\! \alpha_{\mathbf{v}}(\widehat{w},\varphi) ,
\end{equation}
for appropriate vectors $\alpha_{\mathbf{v}}(\widehat{w},\varphi)\in \mC^n$. In particular, 
we see that the support of $\widehat{w}$ is contained in $A^{-1} \mZ^d \times [0,+\infty)$. 
Thus each of the half lines in $A^{-1} \mZ^d \times [0,+\infty)$ carries a solution of the differential
equation \eqref{eq_ast}, and $\widehat{w}$ is a sum of these. We will
show that each of these summands is zero. It can be seen from \eqref{eq_ast2} that the map 
$\varphi \mapsto  \alpha_{\mathbf{v}}(\widehat{w},\varphi) :\calD(\mR) \rightarrow \mC^n$ defines a vector distribution 
$T^{(\mathbf{v})}$ in $(\mD'(\mR))^n$. Moreover, the support of $T^{(\mathbf{v})}$ is contained in $[0,+\infty)$.  
From \eqref{eq_ast2}, we see that for a small enough neighbourhood $N$ of $\mathbf{v} \in A^{-1} \mZ^d$ in $\mR^d$, we have 
$$
\delta_{\mathbf{v}} \otimes M\left(2\pi i \mathbf{v}, \frac{d}{d t} \right)  T^{(\mathbf{v})}=0
$$
in $N\times \mR$. 
But as we had seen above, our algebraic hypothesis implies that the behaviour 
$\mathfrak{B}_{\calD'(\mR)} (M(2\pi i \mathbf{v},\tau))$ is autonomous, and so $T^{(\mathbf{v})}=0$. 
As this happens with each $\mathbf{v}\in A^{-1}\mZ^d$, we conclude that $\widehat{w}=0$ and hence  
also $w=0$. Consequently, the behaviour ${\mathfrak{B}}_{\calD'_{\mA}(\mR^{d+1})}(p)$ 
is autonomous.
\end{proof}

\begin{example}[Diffusion equation]\label{example_diffusion} Consider the diffusion equation 
$$
\left( \frac{\partial}{\partial t}- 
\frac{\partial^2}{\partial x_1^2}-\cdots -\frac{\partial^2}{\partial x_d^2}\right)w=0,
$$
corresponding to to the polynomial $p=\tau-\xi_1^2-\cdots -\xi_d^2$. 
For each $\mathbf{v} \in A^{-1}\mZ^d$, we have that 
$$
\textrm{rank}(p(2\pi i \mathbf{v}, t))= \textrm{rank}(t+ 4\pi^2 \|\mathbf{v}\|_2^2) = \left\{ \begin{array}{ll}
                                                           0 & \textrm{if } t = -4\pi^2 \|\mathbf{v}\|_2^2,\\
                                                           1 & \textrm{if } t \neq  -4\pi^2 \|\mathbf{v}\|_2^2,
                                                          \end{array}\right. 
$$
Thus  
$$
\max_{t \in \mC} \Big( \textrm{rank}(p(2\pi i \mathbf{v}, t))\Big) =1\; (=n),
$$
and so it follows  from Theorem~\ref{theorem_matricial_autonomy} that 
the behaviour ${\mathfrak{B}}_{\calD'_{\mA}(\mR^{d+1})}(p)$ is always autonomous, no matter 
what $\mA$ is.

Note that this is in striking contrast to what happens when we look at just 
distributional solutions: since
$$
\begin{array}{r}
\!\!\deg(p(\xi_1,\dots, \xi_d,\tau))=\deg(\tau - (\xi_1^2+\dots+
\xi_d^2))=2\\
\;\!\mathrel{\rotatebox[origin=c]{90}{$\neq$}} \\
\!\!\deg(p(0,\dots, 0,\tau))=\deg(\tau)=  1
\end{array}
$$
we have from Proposition~\ref{prop_SasThoWil} that ${\mathfrak{B}}_{\calD'(\mR^{d+1})}(p)$ is {\em not} autonomous, 
and this  outcome is physically unexpected. Indeed, if
we imagine the case of diffusion of heat, in which case the $w$ is
the temperature, say along a metallic rod when $d=1$, then zero temperature up to time $t=0$ should mean
that the temperature stays zero in the future as well (since the above PDE
describes the situation when no external heat is supplied).
However, when one considers distributional solutions, one can have pathological 
solutions with a zero past that are nonzero in the future! But if we choose the
physically ``correct'' solution space in this context, namely
functions which at each time instant have a spatial profile belonging
to $L^\infty(\mR)$, then it can be shown that solutions that are zero in the past are also zero in the future, as
expected.  So the real reason for the nonautonomy when one considers solutions in 
 $\calD'(\mR^{d+1})$ is that there is no
restriction on the spatial profiles of the solutions at each time
instant, and wild growth (such as something which grows faster than
$e^{|\mathbf{x}|^2}$) is allowed.  However, with a {\em periodic} profile in the 
spatial direction, namely when the spatial profile is in  $\calD'_{\mA}(\mR^{d})$, 
we know that the spatial profile is automatically tempered (see for example \cite{Don}), and as we have seen above, 
 in this case 
the behaviour  ${\mathfrak{B}}_{\calD'_\mA(\mR^{d+1})}(p)$ {\em is} autonomous, in conformity 
with our physical expectation.
\hfill$\Diamond$
\end{example}

\section{Proof of Theorem~\ref{result_2}}

\begin{proof}[Proof of Theorem~\ref{result_2}] {\bf (1) $\Rightarrow$ (2)}: 
Suppose that  $ {\mathfrak{B}}_{ \calD'_{\mA}(\mR^{d+1}) }(M)$ is controllable. Moreover let 
there exist a vector $\mathbf{v_0}\in A^{-1}\mZ^d$ for which  it is not the case that 
the rank of $R(2\pi i {\mathbf{v_0}}, t)$ is the same for all $t\in \mC$. 
Then we know that the behaviour ${\mathfrak{B}}_{\calD'(\mR)} (M(2\pi i {\mathbf{v_0}}, \tau))$ 
in $(\calD'(\mR))^{n}$ associated 
with the system of ODEs given by $M(2\pi i {\mathbf{v_0}}, \tau) \in (\mC[\tau])^{m\times p}$ is not 
controllable (as can be seen from \cite[Theorem~2, p.396]{PilSha} in the special case of ODEs). Suppose that 
 $\Theta_1, \Theta_2 \in  
{\mathfrak{B}}_{\calD'(\mR)} (M(2\pi i {\mathbf{v_0}}, \tau))$. Set 
\begin{eqnarray*}
 w_1&:=& e^{2\pi i {\mathbf{v_0}}\cdot \mathbf{x}}  \otimes \Theta_1,\\
 w_2&:=& e^{2\pi i {\mathbf{v_0}}\cdot \mathbf{x}}  \otimes \Theta_2. 
\end{eqnarray*}
Then $w_i \in (\calD'_{\mA}(\mR^{d+1}))^{n}$, $i=1,2$,  since  for all $k\in \{1,\dots, d\}$, we have 
 $$
\mathbf{S_{a_k}} w_i= e^{2\pi i \mathbf{v_0}\cdot (\mathbf{x}+\mathbf{a_k})}\otimes \Theta_i
=e^{2\pi i \mathbf{v_0}\cdot \mathbf{a_k}}e^{2\pi i \mathbf{v_0}\cdot \mathbf{x}}\otimes \Theta_i
= 1\cdot e^{2\pi i \mathbf{v_0}\cdot \mathbf{x}}\otimes \Theta_i= w_i.
$$
Also, $w_i\in {\mathfrak{B}}_{\calD'_{\mA}(\mR^{d+1})}(M)$, $i=1,2$, because
$$
M\left(\frac{\partial}{\partial x_1},\cdots, \frac{\partial}{\partial x_d} , \frac{\partial}{\partial t}\right) 
 w_i
= e^{2\pi i \mathbf{v_0}\cdot \mathbf{x}} R\left(2\pi i {\mathbf{v_0}}, \frac{d}{dt}\right)\Theta_i
=0\cdot e^{2\pi i \mathbf{v_0} \cdot \mathbf{x}}=0,
$$
thanks to the fact that 
$$
M\left(2\pi i {\mathbf{v_0}}, \frac{d}{dt}\right) \Theta_i=0.
$$ 
Suppose $w$ patches up $w_1$ and $w_2$, 
that is,  $w\in {\mathfrak{B}}_{\calD'_{\mA}(\mR^{d+1})}(M)$ satisfies 
$w|_{(-\infty, 0)}
=w_1|_{(-\infty,0)}$ and $w|_{(\textrm{T}, +\infty)}=w_2|_{(\textrm{T}, +\infty)} \neq 0$, for some $\textrm{T}\geq 0$. Then 
$$
 \widehat{w} = \sum_{\mathbf{v} \in A^{-1} \mZ^d} \delta_{\mathbf{v}}\otimes T^{(\mathbf{v})},
$$
for some $T^{(\mathbf{v})} \in (\calD'(\mR))^n$. Thus we have 
$$
M\left(2\pi i {\mathbf{v_0}}, \frac{d}{dt}\right) T^{(\mathbf{v_0})}=0,
$$ 
and $T^{(\mathbf{v_0})}|_{(-\infty,0)}=\Theta_1|_{(-\infty,0)}$, while $T^{(\mathbf{v_0})}|_{(\mathrm{T}, +\infty)}
=\Theta_2|_{(\mathrm{T}, +\infty)}$. Thus we have shown that for every $\Theta_1, \Theta_2 \in 
{\mathfrak{B}}_{\calD'(\mR)} (M(2\pi i {\mathbf{v_0}}, \tau))$, there exists a $\mathrm{T}\geq 0$ and a 
$T^{(\mathbf{v_0})} \in {\mathfrak{B}}_{\calD'(\mR)} (M(2\pi i {\mathbf{v_0}}, \tau))$ 
such that $T^{(\mathbf{v_0})}|_{(-\infty,0)}=\Theta_1|_{(-\infty,0)}$, while $T^{(\mathbf{v_0})}|_{(\mathrm{T}, +\infty)}
=\Theta_2|_{(\mathrm{T}, +\infty)}$. In other words, the behaviour 
${\mathfrak{B}}_{\calD'(\mR)} (M(2\pi i {\mathbf{v_0}}, \tau))$ is 
controllable, a contradiction.  This completes the proof of the fact that
 (1) $\Rightarrow$ (2). 

\bigskip 

\noindent  {\bf (2) $\Rightarrow$ (1)}: Let us suppose that $w_1, w_2 \in 
{\mathfrak{B}}_{\calD'_{\mA}(\mR^{d+1})}(M)$. Then   we have 
$$
 \widehat{w_1} = \sum_{\mathbf{v} \in A^{-1} \mZ^d} \delta_{\mathbf{v}}\otimes T_{1}^{(\mathbf{v})},
 \quad \widehat{w_2} = \sum_{\mathbf{v} \in A^{-1} \mZ^d} \delta_{\mathbf{v}}\otimes T_{2}^{(\mathbf{v})},
$$ 
for some $T_1^{(\mathbf{v})}, T_2^{(\mathbf{v})} \in (\calD'(\mR))^n$. Moreover, owing to the correspondence between 
$\calD'_{\mA}(\mR^d)$ and the space of sequences $s'(\mZ^d)$ of at most polynomial growth, it follows that for each $\varphi \in 
\calD(\mR)$, there exist $M_\varphi >0$ and a positive integer $k_\varphi$, such that we have the estimates 
$$
\|\langle T_{1}^{(\mathbf{v})}, \varphi\rangle\|_2  \leq  M_{\varphi}(1+\|\mathbf{n}\|_2)^{k_{\varphi}} ,\quad 
\|\langle T_{2}^{(\mathbf{v})} , \varphi\rangle\|_2  \leq  M_{\varphi}(1+\|\mathbf{n}\|_2)^{k_{\varphi}} ,
$$
for all $\mathbf{n}:=A\mathbf{v} \in \mZ^d$, and where $\|\cdot\|_2$ is the usual Euclidean norm.  Let $\mathrm{T}>0$, and 
let $\theta \in C^\infty(\mR)$ be such that $\theta(t)=1$ for all $t\leq 0$, $\theta(t)=0$ for all $t>\mathrm{T}/4$ 
and $0\leq \theta (t)\leq 1$ for all $t\in \mR$. Define $T^{(\mathbf{v})}\in (\calD'(\mR))^n$ by 
$$
T^{(\mathbf{v})}:= \theta T_1^{(\mathbf{v})}+\theta(\mathrm{T}-\cdot) T_2^{(\mathbf{v})}.
$$
Set $\widehat{w}\in \calD'(\mR^{d+1})$ to be 
 $
\widehat{w}=\displaystyle \sum_{\mathbf{v} \in A^{-1} \mZ^d}\delta_{\mathbf{v}}\otimes T^{(\mathbf{v})}.
$ 
Then for every $\varphi \in \calD(\mR)$, we have 
\begin{eqnarray*}
\|\langle T^{(\mathbf{v})}, \varphi\rangle\|_2  &\leq& \|\langle \theta T_1^{(\mathbf{v})}, \varphi \rangle \|_2+ 
\|\langle \theta(\mathrm{T}-\cdot) T_2^{(\mathbf{v})}, \varphi \rangle \|_2 \\
&\leq & M_{\theta \varphi} (1+\|\mathbf{n}\|_2)^{k_{\theta \varphi}}+ 
M_{\theta (\mathrm{T}-\cdot)\varphi} (1+\|\mathbf{n}\|_2)^{k_{\theta(\mathrm{T}-\cdot) \varphi}}\\
&\leq & \max\{M_{\theta \varphi},M_{\theta (\mathrm{T}-\cdot)\varphi}\}  
(1+\|\mathbf{n}\|_2)^{\max\{k_{\theta \varphi},k_{\theta(\mathrm{T}-\cdot) \varphi} \}},
\end{eqnarray*}
and so $\vec{w}(\varphi)\in \calD'_{\mA}(\mR^d)$.  Thus $w\in \calD'_{\mA}(\mR^{d+1})$. 
Also, $w\in {\mathfrak{B}}_{\calD'_{\mA}(\mR^{d+1})}(M)$ because 
$$
M\left( 2\pi i \mathbf{y}, \frac{\partial}{\partial t} \right) ( \delta_{\mathbf{v}}\otimes T^{(\mathbf{v})} ) 
= 
M\left( 2\pi i \mathbf{v}, \frac{d}{d t} \right)( \delta_{\mathbf{v}}\otimes T^{(\mathbf{v})} ) 
=
0,
$$
for each $\mathbf{v} \in A^{-1}\mZ^d$, and so 
 $$
 M\left( 2\pi i \mathbf{y}, \frac{\partial}{\partial t} \right) \widehat{w} =0.
$$ 
Consequently, 
$$
M\left( \frac{\partial}{\partial x_1},\cdots, \frac{\partial}{\partial x_d} , \frac{\partial}{\partial t} \right) w =0,
$$
that is, $w\in {\mathfrak{B}}_{\calD'_{\mA}(\mR^{d+1})}(M)$.

Finally, because 
$
T^{(\mathbf{v})}|_{(-\infty,0)}=T_1^{(\mathbf{v})}|_{(-\infty,0)}$ and $ 
T^{(\mathbf{v})}|_{(\mathrm{T}, +\infty)}=T_2^{(\mathbf{v})}|_{(\mathrm{T}, +\infty)}$, 
it follows that 
$\widehat{w}|_{(-\infty, 0)}=\widehat{w_1}|_{(-\infty,0)}$ and $\widehat{w}|_{(\mathrm{T}, +\infty)}
=\widehat{w_2}|_{(\mathrm{T}, +\infty)}$. Consequently, 
$w|_{(-\infty, 0)}=w_1|_{(-\infty,0)}$   and $w|_{(\mathrm{T}, +\infty)}=w_2|_{(\mathrm{T}, +\infty)}$, showing that the 
behaviour is controllable. This completes the proof.
\end{proof}

\begin{example} Let us consider again 
the polynomial $p=\tau-(\xi_1^2+\dots+\xi_d^2)$ from Example~\ref{example_diffusion}. 
For each $\mathbf{v} \in A^{-1}\mZ^d$, we have that 
$$
\textrm{rank}(p(2\pi i \mathbf{v}, t))= \textrm{rank}(t+ 4\pi^2 \|\mathbf{v}\|_2^2) = \left\{ \begin{array}{ll}
                                                           0 & \textrm{if } t = -4\pi^2 \|\mathbf{v}\|_2^2,\\
                                                           1 & \textrm{if } t \neq  -4\pi^2 \|\mathbf{v}\|_2^2,
                                                          \end{array}\right. 
$$
and so it follows  from Theorem~\ref{result_2} that  ${\mathfrak{B}}_{\calD'_{\mA}(\mR^{d+1})}(p)$ 
is not controllable. 
\hfill$\Diamond$
\end{example}

\begin{remark}
We mention a related question: Is there an algebraic characterization in terms of $M$ of 
{\em approximate} controllability of ${\mathfrak{B}}_{\calD'_{\mA}(\mR^{d+1})}(M)$?  Here, 
by approximate controllability, we mean the following.

Let 
 $ 
\calD'_{\mA}(\mR^{d}\times (0,+\infty)):=\{w|_{t>0}: w \in \calD'_{\mA}(\mR^{d+1})\} ,
$  
endowed with the induced topology from $\calD'(\mR^d\times (0,+\infty))$. 
We call ${\mathfrak{B}}_{\calD'_{\mA}(\mR^{d+1})}(M)$ 
{\em approximately controllable} if for every pair of trajectories $w_1, w_2 \in {\mathfrak{B}}_{\calD'_{\mA}(\mR^{d+1})}(M)$, 
and every neighbourhood $N$ 
of $0$ in $ 
(\calD'_{\mA}(\mR^{d}\times (0,+\infty)))^n,
$ 
there 
is a $w\in {\mathfrak{B}}_{\calD'_{\mA}(\mR^{d+1})}(M)$ and a $\mathrm{T}\geq 0$ such that  
\begin{eqnarray*}
&& w|_{(-\infty, 0)}=w_1|_{(-\infty,0)}\;\textrm{  and }\\
&&\mathbf{S_{(\mathbf{0},-\mathrm{T})} } (w-w_2)|_{(0, +\infty)} \in N.
\end{eqnarray*}
 Here,  $\mathbf{S_{(\mathbf{0},-\mathrm{T})} } $ denotes the translation operator in $(\calD'(\mR^{d+1}))^n$ corresponding to 
 the vector $(\mathbf{0},-\mathrm{T})\in \mR^{d+1}$.

When one considers just {\em smooth} solutions, that is, $\calW=C^\infty(\mR^{d+1})$, 
then one can define approximate controllability analogously to the above. 
(In particular, then the $N$ in the definition  is  a neighbourhood in the appropriate topology 
of the Frechet space of $C^\infty$ functions in the half-space $t>0$.) Using the main result in \cite{Hor0}, 
a  characterization of approximate controllability 
in the smooth solution case was given in \cite[Theorem~4.6]{SasThoWil} when $M$ is simply a polynomial. 

It is not hard to show that for a nontrivial behaviour ${\mathfrak{B}}_{\calD'_{\mA}(\mR^{d+1})}(M)$, one has 
the following hierarchy of properties:
$$
\textrm{controllability } \Rightarrow \textrm{ approximate controllability } 
\Rightarrow \;\; \neg\textrm{(autonomy)}.
$$
Thus in our search for the appropriate algebraic condition characterizing approximate controllability 
of ${\mathfrak{B}}_{\calD'_{\mA}(\mR^{d+1})}(M)$, we expect an algebraic condition 
that lies between the two characterizations 
of controllability and autonomy given in this article in Theorems~\ref{theorem_matricial_autonomy}  
and \ref{result_2}. We leave the investigation of this question for future work.
\end{remark}

\medskip 

\noindent {\bf Acknowledgements:} The author thanks the two anonymous reviewers for several useful comments. 
In particular, for the suggestion of  developing the matrix case results as opposed to only for polynomials 
 done in the previous version of the article.


\begin{thebibliography}{99}

\bibitem{BalSta}
J.A. Ball and O.J. Staffans. 
Conservative state-space realizations of dissipative system behaviors. 
{\em Integral Equations Operator Theory}, 54:151-213, no. 2,  2006. 

\bibitem{Bel}
M. Belur. 
{\em Control in a behavioral context}. 
Ph.D. Thesis, Rijksuniversiteit Groningen, The Netherlands, 2003. Available electronically at: 
 {\tt dissertations.ub.rug.nl/faculties/science/2003/m.n.belur/}

\bibitem{Car}
R.W. Carroll. 
{\em Abstract methods in partial differential equations}.
Harper's Series in Modern Mathematics. 
Harper and Row, New York-London, 1969.

\bibitem{CurIftZwa}
R.F. Curtain, O.V. Iftime, H.J. Zwart. 
System theoretic properties of a class of spatially invariant systems. 
{\em Automatica J. IFAC}, 45:1619-1627, no. 7, 2009. 

\bibitem{CurSas}
R.F. Curtain, A.J. Sasane. 
On Riccati equations in Banach algebras. 
{\em SIAM Journal on Control and Optimization},  49:464-475, no. 2, 2011. 

\bibitem{Don}
W.F. Donoghue, Jr. 
{\em Distributions and Fourier Transforms}. 
Pure and Applied Mathematics 32, Academic Press, New York and London, 1969.

\bibitem{Hor0}
L. H\"ormander.
Null solutions of partial differential equations.
{\em Archive for Rational Mechanics and Analysis}, 4:255-261, 1960. 

\bibitem{Hor} 
L. H\"ormander.  
{\em The Analysis of Linear Partial Differential Operators. I.  
Distribution Theory and Fourier Analysis.} 2nd Edition. 
Springer Study Edition. Springer-Verlag, Berlin, 1990.

\bibitem{ObeSch}
U. Oberst and M. Scheicher. 
Time-autonomy and 
time-controllability of discrete multidimensional behaviors. 
{\em  International Journal of Control},  85:990-1009, no. 8, 2012. 

\bibitem{PolWil}
J.W. Polderman and J.C. Willems. 
{\em Introduction to Mathematical Systems Theory. 
A Behavioral Approach.} 
Texts in Applied Mathematics, 26, Springer-Verlag, New York, 1998. 


\bibitem{PilSha}
H.K. Pillai and S. Shankar. 
A behavioral approach to control of distributed systems. 
{\em SIAM Journal on Control and Optimization}, 37:388-408, no. 2, 1999.

\bibitem{SasThoWil}
A.J. Sasane, E.G.F. Thomas, J.C. Willems. 
Time-autonomy versus time-controllability. 
{\em Systems and Control Letters}, 45:145-153, no.2, 2002. 

\end{thebibliography}
\end{document}